\def \dd {\partial}
\DeclareMathOperator{\Ric}{Ric}
\DeclareMathOperator{\diam}{diam}
\DeclareMathOperator{\diver}{div}
\DeclareMathOperator{\sech}{sech}
\DeclareMathOperator{\Span}{span}
\title{Explicit Lower Bound of the first eigenvalue of the laplacian on K\"ahler manifolds}
\author{Benjamin Rutkowski}
\address{Department of Mathematics\\
         California State University, Fullerton\\
         Fullerton, CA 92831}
\email{\href{mailto:brutkowski0@csu.fullerton.edu}{brutkowski0@csu.fullerton.edu}}
\author{Shoo Seto}
\address{Department of Mathematics\\
         California State University, Fullerton\\
         Fullerton, CA 92831}
\email{\href{mailto:shoseto@fullerton.edu}{shoseto@fullerton.edu}}
\keywords{Eigenvalues of the Laplacian, K\"ahler manifolds}
\date{}
\theoremstyle{definition}
\newtheorem{example}{Example}[section]
\newtheorem{lemma}{Lemma}[section]
\newtheorem{remark}{Remark}[section]
\newtheorem{theorem}{Theorem}[section]
\newtheorem*{theorem*}{Theorem}
\begin{document}
\maketitle

\begin{abstract}
We establish an explicit lower bound of the first eigenvalue of the Laplacian on K\"ahler manifolds based off the comparison results of Li and Wang \cite{li-wang}. The lower bound will depend on the diameter, dimension, holomorphic sectional curvature and orthogonal Ricci curvature.
\end{abstract}

\section{Introduction}

Studying the spectrum of the Laplace operator and its relation to the underlying geometry has been an active field of research with tremendous amounts of results.  There are numerous applications in areas outside of mathematics such as physics where the Schr\"odinger operator, which arises in quantum mechanics, can be investigated through the spectrum of the Laplacian.  The discrete energy levels of electrons is a direct consequence of the spectral theory of the Laplacian.   Also, in the branch of harmonic analysis, expanding a function as a Fourier series can be seen as an eigenfunction expansion of the Laplacian and the eigenvalues are directly related to the rate of convergence of the series.  In this article, we will focus on these eigenvalues and in particular, on the first or principal eigenvalue.  

The spectrum depends heavily on the underlying domain of the functions which the operator acts on and looking at the eigenvalue problem on domains of Riemannian manifolds gives rise to many interesting results due to the introduction of curvature.  One can study this problem from both a differential geometric point of view and analytic point of view of the equation.  

Lower bounds of the first eigenvalue, or to be more specific, the lower bound of the gap between the first two eigenvalues, called the ``fundamental gap'', and is of particular interest among eigenvalue problems.  Note that depending on the boundary condition, the first eigenvalue can be zero so that the lower bound of the gap is simply the lower bound of the first non-trivial eigenvalue.  For example in physics, the gap represents the `excitation energy' required to reach the first excited state from the ground state.  In mathematics, the first eigenvalue can tell us information on the rigidity of the domain, the isoperimetric inequality, the Poincar\'e inequality, convergence rates of Fourier series, etc.  

In this article, we give an explicit lower bound of the first nonzero eigenvalue on K\"ahler manifolds which are essentially complex analogues of Riemannian manifolds.  Our result will be based off a recent result of Li and Wang \cite{li-wang}, which gives a comparison result of the first eigenvalue of the Laplacian on compact K\"ahler manifolds with a one-dimensional eigenvalue value problem, (see \eqref{li-wang-eqn}.  From their comparison result we can give the following explicit lower bound
\begin{theorem}[Main result]\label{main}
Let $(M^m,g,J)$ be a compact K\"ahler manifold of complex dimension $m$ and diameter $D$ whose holomorphic sectional curvature is bounded from below by $H \geq 4\kappa_1$ and orthogonal Ricci curvature is bounded from below by $\Ric^\perp \geq 2(m-1)\kappa_2$ for some $\kappa_1,\kappa_2 \in \mathbb{R}$.  Let $\lambda_1$ be the first nonzero eigenvalue of the Laplacian on $M$ (with Neumann boundary condition if $M$ has a strictly convex boundary). 
\begin{equation*}
\mu_1 \geq \sup_{s\in (0,1)}\left\{4s(1-s)\frac{\pi^2}{D^2} + 2s(\kappa_2(m-1) + 2\kappa_1\right\}
\end{equation*}
\end{theorem}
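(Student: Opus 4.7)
The plan is to invoke the Li--Wang comparison \eqref{li-wang-eqn}, which, under the curvature hypotheses $H \ge 4\kappa_1$ and $\Ric^\perp \ge 2(m-1)\kappa_2$, reduces the estimate to $\lambda_1(M) \ge \mu_1$, where $\mu_1$ is the first nonzero eigenvalue of a one-dimensional model eigenvalue problem on $[0,D]$ whose data are assembled from the Jacobi-type comparison functions associated to $\kappa_1$ and $\kappa_2$. Thus the remaining task is purely analytic and one-dimensional: produce an explicit lower bound for $\mu_1$ in terms of $D, m, \kappa_1, \kappa_2$ that matches the claimed expression.

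The first step is to write the Rayleigh characterization of $\mu_1$ for the Li--Wang 1D problem,
\begin{equation*}
\mu_1 \;=\; \inf_{\phi} \frac{\int_0^D (\phi'(t))^{2}\, w(t)\, dt}{\int_0^D \phi(t)^{2}\, w(t)\, dt},
\end{equation*}
where the infimum is over nonzero $\phi$ orthogonal to constants in the $w$-weighted $L^2$ inner product and $w$ is the weight coming from the comparison model. Performing the substitution $\phi = \psi\, w^{-1/2}$ converts this into an unweighted Schr\"odinger problem $-\psi'' + U(t)\psi = \mu\psi$ on $[0,D]$ with Neumann-type boundary conditions, where $U = \tfrac14 (w'/w)^{2} - \tfrac12 (w'/w)'$. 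The hypotheses $H \ge 4\kappa_1$ and $\Ric^\perp \ge 2(m-1)\kappa_2$ should translate, through the explicit form of the comparison weight $w$, into a uniform pointwise lower bound on $U(t)$ whose size is controlled by the combination $\kappa_2(m-1) + 2\kappa_1$.

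To recover the $s$-dependent form in the theorem, I would introduce a one-parameter family of inequalities of the shape
\begin{equation*}
\int_0^D (\psi')^{2}\, dt + \int_0^D U\, \psi^{2}\, dt \;\ge\; 4s(1-s)\int_0^D (\psi')^{2}\, dt + 2s\bigl(\kappa_2(m-1) + 2\kappa_1\bigr)\int_0^D \psi^{2}\, dt
\end{equation*}
for every $s \in (0,1)$, where the trade-off coefficients $4s(1-s)$ and $2s$ are produced by completing the square so that the remainder is nonnegative. Combining this with the one-dimensional Neumann Poincar\'e inequality $\int_0^D (\psi')^{2}\, dt \ge (\pi/D)^{2} \int_0^D \psi^{2}\, dt$ yields
\begin{equation*}
\mu_1 \;\ge\; 4s(1-s)\frac{\pi^{2}}{D^{2}} + 2s\bigl(\kappa_2(m-1) + 2\kappa_1\bigr),
\end{equation*}
and taking the supremum over $s \in (0,1)$ recovers the statement. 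The sanity check is clean: $s = 1/2$ returns the Poincar\'e-plus-potential bound $\pi^2/D^2 + \kappa_2(m-1)+2\kappa_1$, while $s \to 1$ maximizes the linear-in-curvature contribution and is favored in the strongly positively curved regime.

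The main obstacle is the combined verification of both the uniform lower bound on the effective potential $U(t)$ and the particular interpolation identity with coefficients $4s(1-s)$ and $2s$. The former requires a case-by-case analysis on the signs of $\kappa_1$ and $\kappa_2$, since the comparison functions are trigonometric for positive model curvature and hyperbolic for negative model curvature, so the monotonicity and convexity properties of $w$ differ sharply. The latter is a precise completing-the-square identity that must be arranged so that the error is nonnegative for every $s \in (0,1)$; once this is in hand, the Poincar\'e step and the final supremum are routine one-dimensional calculus.
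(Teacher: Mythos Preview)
Your outline has a genuine gap at the two places you yourself flag as ``obstacles.'' First, after the Liouville substitution $\phi = \psi\,w^{-1/2}$, the Neumann condition $\phi'=0$ at the endpoints becomes a Robin condition on $\psi$, and the weighted orthogonality $\int \phi\,w = 0$ becomes $\int \psi\,w^{1/2}=0$, not $\int \psi = 0$; so the unweighted Neumann Poincar\'e inequality $\int(\psi')^2 \ge (\pi/D)^2\int\psi^2$ is not available. Second, and more seriously, the potential $U$ does not admit the pointwise lower bound you need. With $q = w'/w = -(2(m-1)T_{\kappa_2}+T_{4\kappa_1})$ one computes $U = \tfrac12 q' + \tfrac14 q^2$; at the midpoint $t=0$ of $[-D/2,D/2]$ we have $q(0)=0$ and $q'(0) = -(2(m-1)\kappa_2+4\kappa_1)$, so $U(0) = -(m-1)\kappa_2 - 2\kappa_1$, which has the \emph{wrong sign} when the curvatures are positive. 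Hence the interpolation inequality you propose (which at $s=\tfrac12$ would force $\int U\psi^2 \ge ((m-1)\kappa_2+2\kappa_1)\int\psi^2$) cannot hold in general, and no amount of case-checking on the signs of $\kappa_i$ will rescue it.

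The paper avoids both issues by a different mechanism. It first differentiates the model ODE, so that $v=\phi'$ satisfies a Dirichlet problem on $[-D/2,D/2]$; this is what legitimizes Wirtinger's inequality. Then, instead of a linear completing-the-square, it multiplies the differentiated equation by the \emph{nonlinear} test function $v^{a-1}$ and integrates. Integration by parts on the drift terms produces the factor $1/a$ (Lemma~\ref{int part}), while the second-order term yields $4(a-1)/a^2\int((v^{a/2})')^2$; the inequality $S_K\ge K$ (Lemma~\ref{bounds}) then enters with the correct sign because the coefficient $(a-1)/a$ sits on both sides and cancels appropriately. Setting $s=1-1/a$ is what gives the pair $4s(1-s)$ and $2s$ --- the parameter comes from the exponent in the Moser-type test function, not from a quadratic interpolation at the level of $\psi$.
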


Our method is based off of a technique used by Shi-Zhang \cite{shi-zhang} and by Futaki-Li-Li \cite{futaki-li-li} which is reminiscent of the Moser iteration technique and takes advantage of Wirtinger's inequality, which itself is a consequence of the first eigenvalue in one-dimension.

\section{Set up and preliminary results}

\subsection{Laplacian on Riemannian manifolds}
Let $(M,g)$ be an $n$-dimensional Riemannian manifold.   The Laplacian (or Laplace-Beltrami) operator is a second order elliptic operator acting on $C^2(M)$ defined by
\begin{equation*}
\Delta u := \diver(\nabla u)
\end{equation*}
For some point $p\in M$, we let $(x^1,x^2,\ldots, x^n)$ be some local coordinates.  Then in these coordinates the Laplacian can be written as 
\begin{equation*}
\Delta u = \frac{1}{\sqrt{\det(g)}}\frac{\dd}{\dd x^j}\left(\sqrt{\det(g)}g^{ij}\frac{\dd u}{\dd x^i}\right)
\end{equation*}
where $g^{ij}$ is the $(i,j)$-th entry of the inverse of the metric tensor $(g_{ij})$.  We say that $u\not\equiv 0$ is an eigenfunction of the Laplacian on $M$ with eigenvalue $\lambda \in \mathbb{R}$, if
\begin{equation*}
\Delta u = -\lambda u
\end{equation*}
in $M$.  When $\dd M = \emptyset$, from the spectral theory of compact self-adjoint operators, we know that the eigenvalues are discrete and can be arranged as a non-decreasing diverging series
\begin{equation*}
0=\lambda_0 < \lambda_1 \leq \lambda_2 \leq \ldots \to \infty.
\end{equation*}
If $\dd M \neq \emptyset$, we must impose boundary conditions to obtain discreteness of the eigenvalues.  For instance, the Dirichlet boundary condition prescribes the boundary value of the solution, often times being constantly zero.  The Neumann boundary condition prescribes the value of the normal derivative of the solution at the boundary, again often times this being zero.  The Neumann boundary and the closed (compact and with no boundary) case behave similarly since by divergence theorem
\begin{align*}
\int_M \nabla_n udS = \int_{\dd M} udV = 0
\end{align*}
when either $\nabla_nu=0$ or $\dd M=\emptyset$. 

The eigenvalues also satisfy a variational characterization.  Of note is the first (non-trivial) eigenvalue
\begin{equation*}
\lambda_1 = \inf_{u \in C^2(M)}\left\{ \frac{\int_M|\nabla u|^2dV}{\int_M u^2dV} \ \biggr| \ u \not\equiv 0, u \in \mathcal{D}\right\}
\end{equation*}
where $\mathcal{D}$ denotes a class of functions with prescribed behavior which will depend on the type of eigenvalue problem we wish to study.  For example, $\mathcal{D}$ are functions with zero boundary value, provided $\dd M \neq \emptyset$, while for the Neumann and closed case, $\mathcal{D}$ are functions such that $\int_M udV=0$.

\begin{example}
In one-dimension, a compact connected manifold is simply an interval $[0,L]$ for some $L>0$.  The Dirichlet eigenvalue problem is
\begin{align*}
\begin{cases}
u'' + \lambda u = 0 & \text{ on }[0,L]\\
u(0)= u(L) = 0.
\end{cases}
\end{align*}
The eigenfunctions are given by $u_n(x) = \sin(\tfrac{n\pi}{L}x)$, $n\in \mathbb{N}$, with eigenvalues $\lambda_n = \frac{n^2\pi^2}{L^2}$. In particular, the variational characterization of the first eigenvalue gives us \textit{Wirtinger's inequality}.
\begin{lemma}[Wirtinger's inequality]\label{wirt}
For smooth $f$ such that $f(0)=f(L)=0$, we have
\begin{equation*}
\int_0^L f^2 dx \leq \frac{L^2}{\pi^2}\int_0^L (f')^2dx.
\end{equation*}
\end{lemma}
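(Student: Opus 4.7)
The plan is to deduce Wirtinger's inequality directly from the spectral data that the preceding example has already extracted. Since the Dirichlet eigenvalues on $[0,L]$ are $\lambda_n = n^2\pi^2/L^2$ with eigenfunctions $u_n(x) = \sin(n\pi x/L)$, the first eigenvalue is $\pi^2/L^2$, so the Rayleigh quotient characterization of $\lambda_1$ stated above gives
\[
\frac{\pi^2}{L^2} \;\leq\; \frac{\int_0^L (f')^2\,dx}{\int_0^L f^2\,dx}
\]
for every admissible $f \not\equiv 0$, and cross-multiplying is exactly the claim. This is the conceptually correct proof, but a self-contained argument needs to verify that the Rayleigh quotient is in fact minimized by $u_1$, rather than just witness that $u_1$ achieves the value $\pi^2/L^2$.

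To do this rigorously I would use Fourier expansion. The family $\{\sin(n\pi x/L)\}_{n\geq 1}$ is a complete orthogonal system in $L^2([0,L])$ (the standard completeness theorem for the Sturm--Liouville operator $-d^2/dx^2$ with Dirichlet data), so any $f \in C^2([0,L])$ with $f(0)=f(L)=0$ admits the expansion
\[
f(x) = \sum_{n=1}^\infty a_n \sin\!\left(\tfrac{n\pi x}{L}\right),
\]
with Parseval identity $\int_0^L f^2\,dx = (L/2)\sum_n a_n^2$. Integrating by parts twice and using the boundary conditions gives $\int_0^L (f')^2\,dx = -\int_0^L f f''\,dx$; expanding $f''$ term-by-term as $-\sum_n a_n (n\pi/L)^2 \sin(n\pi x/L)$ and applying orthogonality yields $\int_0^L (f')^2\,dx = (L/2)\sum_n (n\pi/L)^2 a_n^2$. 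Bounding $(n\pi/L)^2 \geq (\pi/L)^2$ termwise finishes the proof, with equality precisely when only the $n=1$ mode is present.

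The main technical obstacle is ensuring that the Fourier series and its differentiated counterpart converge well enough to justify term-by-term integration. For $C^2$ functions vanishing at the endpoints this is classical: two integrations by parts show that the sine coefficients of $f$ decay like $O(n^{-2})$, which gives absolute convergence of $\sum n^2 a_n^2$ and validates both Parseval identities used above. Alternatively, one could prove the inequality first for finite trigonometric sums and then extend by density in $H_0^1([0,L])$, which avoids pointwise convergence considerations entirely. Either route is standard and completes the argument.
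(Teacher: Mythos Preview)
Your proposal is correct and aligns with the paper's treatment: the paper does not give a standalone proof but simply remarks that the variational characterization of the first Dirichlet eigenvalue on $[0,L]$ yields Wirtinger's inequality, which is precisely your first paragraph. Your subsequent Fourier-expansion justification is sound and more detailed than anything the paper supplies.
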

\end{example}

In general, finding the explicit values of eigenvalues is impossible for all but highly symmetric domains.  Instead we opt to find estimates, in particular on lower bounds of the first eigenvalues.  Two classical results are the Lichnerowicz estimate and the Zhong-Yang estimate.
\begin{theorem}[Lichnerowicz \cite{lichnerowicz}]
Let $(M,g)$ be a complete $n$-dimensional Riemannian manifold with $\Ric \geq (n-1)K>0$.  Let $\lambda_1$ be the first nonzero eigenvalue of the Laplacian on $M$.  Then
\begin{equation*}
\lambda_1(M) \geq nK.
\end{equation*}
\end{theorem}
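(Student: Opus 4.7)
The plan is to apply the Bochner-Weitzenböck identity to a first eigenfunction $u$ on the closed manifold $M$ and extract the bound by integrating over $M$. Specifically, let $u \not\equiv 0$ satisfy $\Delta u = -\lambda_1 u$. The Bochner formula reads
\begin{equation*}
\tfrac{1}{2}\Delta |\nabla u|^2 = |\Hess u|^2 + \langle \nabla \Delta u, \nabla u\rangle + \Ric(\nabla u, \nabla u).
\end{equation*}
After substituting $\nabla \Delta u = -\lambda_1 \nabla u$ and integrating both sides over $M$, the left-hand side vanishes by the divergence theorem, leaving a global identity that ties together $\int_M |\Hess u|^2$, $\lambda_1 \int_M |\nabla u|^2$, and the Ricci curvature term.

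The next step is to bound each of these three quantities. First, the pointwise Cauchy-Schwarz inequality $|\Hess u|^2 \geq \tfrac{1}{n}(\tr \Hess u)^2 = \tfrac{1}{n}(\Delta u)^2 = \tfrac{\lambda_1^2}{n}u^2$ yields $\int_M |\Hess u|^2 \geq \tfrac{\lambda_1^2}{n}\int_M u^2$. Second, the curvature hypothesis $\Ric \geq (n-1)K\, g$ gives $\int_M \Ric(\nabla u, \nabla u) \geq (n-1)K \int_M |\nabla u|^2$. Third, multiplying the eigenvalue equation by $u$ and integrating by parts produces the standard identity $\int_M |\nabla u|^2 = \lambda_1 \int_M u^2$.

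Substituting these three ingredients into the integrated Bochner identity gives
\begin{equation*}
0 \geq \tfrac{\lambda_1^2}{n}\int_M u^2 - \lambda_1^2 \int_M u^2 + (n-1)K\lambda_1 \int_M u^2,
\end{equation*}
and dividing through by the positive quantity $\lambda_1 \int_M u^2$ and rearranging immediately yields $\lambda_1 \geq nK$.

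None of the steps presents a genuine obstacle, as this argument is entirely classical once one writes down the Bochner formula; the only delicate point is ensuring all boundary terms vanish (which is automatic in the closed case and in the Neumann case by the same calculation as for the excerpt's divergence-theorem remark). It is worth flagging that the Cauchy-Schwarz step becomes an equality precisely when $\Hess u = \tfrac{\Delta u}{n} g$, which is the algebraic condition underlying Obata's rigidity theorem characterizing the sphere; tracking this point would be useful if the equality case were needed later.
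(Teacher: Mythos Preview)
The paper does not supply its own proof of this statement; it is quoted as a classical result with a citation to \cite{lichnerowicz}, so there is nothing to compare against. Your argument is the standard Bochner--Weitzenb\"ock proof and is correct as written. One minor point worth tightening: the hypothesis is that $M$ is \emph{complete}, not closed, so before invoking the discrete spectrum, integrating over $M$, and discarding boundary terms, you should remark that Bonnet--Myers (applied with $\Ric \geq (n-1)K > 0$) forces $M$ to be compact without boundary.
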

\begin{remark}
Obata \cite{obata} establishes that equality holds if and only if $M$ is isometric to a simply connected space of constant curvature $K$, i.e., the round $n$-sphere of radius $\frac{1}{\sqrt{K}}$.
\end{remark}

\begin{theorem}[Zhong-Yang]
Let $(M,g)$ be a closed $n$-dimensional Riemannian manifold with $\Ric \geq 0$.  Then
\begin{align*}
\lambda_1(M) \geq \frac{\pi^2}{D^2},
\end{align*}
where $D=\diam(M)$.
\end{theorem}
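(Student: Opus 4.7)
The plan is to adapt the gradient-estimate-plus-integration technique of Zhong and Yang. Let $u$ be a first eigenfunction, so that $\Delta u = -\lambda_1 u$, and normalize $u$ so that $\max_M u = 1$ and $\min_M u = -k$ with $0 < k \leq 1$ (replacing $u$ by $-u$ if necessary). The goal is to establish a pointwise bound of the form $|\nabla u|^2 \leq \lambda_1\,\Phi(u)$ for an explicit function $\Phi$ vanishing at $u = 1$ and $u = -k$, and then to integrate this inequality along a minimizing geodesic between the minimum and maximum points of $u$.

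For the gradient estimate I would start from the Bochner formula
$$\tfrac{1}{2}\Delta|\nabla u|^2 = |\Hess u|^2 + \langle \nabla u, \nabla \Delta u\rangle + \Ric(\nabla u,\nabla u),$$
and use $\nabla \Delta u = -\lambda_1 \nabla u$ together with the hypothesis $\Ric \geq 0$ to reduce to
$$\tfrac{1}{2}\Delta|\nabla u|^2 \geq |\Hess u|^2 - \lambda_1 |\nabla u|^2.$$
I would then consider an auxiliary function of the form
$$P(x) = |\nabla u|^2 - \lambda_1\bigl(1 - u^2 + c\,\psi(u)\bigr),$$
with $c$ a constant depending on $k$ and $\psi$ a specific profile function on $[-k,1]$ to be determined. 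Applying the maximum principle at an interior maximum of $P$, using the Cauchy-Schwarz lower bound $|\Hess u|^2 \geq (\Delta u)^2/n$ and a careful manipulation of the first-derivative terms, one arranges that all error terms cancel, forcing $P \leq 0$ on $M$ and hence $|\nabla u|^2 \leq \lambda_1\,\Phi(u)$.

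Next, let $p, q \in M$ realize the minimum and maximum of $u$, and let $\gamma:[0,L]\to M$ be a minimizing geodesic from $p$ to $q$, so $L \leq D$. Since $\tfrac{d}{ds}u(\gamma(s)) \leq |\nabla u|(\gamma(s))$, separation of variables yields
$$\int_{-k}^{1} \frac{dt}{\sqrt{\Phi(t)}} \;\leq\; \sqrt{\lambda_1}\,L \;\leq\; \sqrt{\lambda_1}\,D.$$
The profile $\psi$ is engineered precisely so that the left-hand integral evaluates to $\pi$ for every admissible $k \in (0,1]$, which gives $\sqrt{\lambda_1}\,D \geq \pi$, the desired estimate.

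The main obstacle is the construction of the correction term $\psi$. If $k = 1$, one can take the naive $\Phi(u) = 1 - u^2$ and the calculation closes directly, because $\int_{-1}^{1} dt/\sqrt{1-t^2} = \pi$. But for $k < 1$ the asymmetry of the range $[-k,1]$ breaks both the maximum-principle step and the value of the integral, and compensating for this requires a nontrivial $\psi$ satisfying a carefully chosen ODE (the Zhong-Yang barrier equation) that simultaneously controls the Bochner computation and preserves the value $\pi$ of the one-dimensional integral. Verifying both properties of $\psi$ at once is the delicate technical heart of the proof.
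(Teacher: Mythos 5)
The paper does not actually prove this statement: it is quoted as a classical theorem of Zhong and Yang, and within the paper's own framework it is recovered as the $K=0$ case of Theorem \ref{onedim}, where the model equation $\varphi''=-\bar{\lambda}\varphi$ on $[-D/2,D/2]$ has explicit first Neumann eigenfunction $\varphi(x)=\cos(\tfrac{\pi}{D}x)$ and eigenvalue $\pi^2/D^2$. Your route is instead the original Zhong--Yang maximum-principle argument (Bochner formula, gradient estimate with a barrier, integration along a minimizing geodesic), which is a legitimate, self-contained alternative that avoids invoking Kr\"oger's comparison theorem. However, as written it has a genuine gap: the entire content of the theorem beyond the weaker Li--Yau-type bound lives in the construction of the correction profile $\psi$, and you explicitly defer both its construction and the verification of its two required properties. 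Asserting that ``one arranges that all error terms cancel'' is naming the difficulty, not resolving it.

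Two further points where the sketch as stated would not close. First, before introducing $\psi$ one must renormalize $u$ to $v=\bigl(u-\tfrac{1-k}{2}\bigr)/\tfrac{1+k}{2}$, so that $v$ ranges over $[-1,1]$ and satisfies $\Delta v=-\lambda_1(v+a)$ with $a=\tfrac{1-k}{1+k}$; keeping the range $[-k,1]$ and demanding that $1-u^2+c\,\psi(u)$ vanish at $u=-k$ forces $c\,\psi(-k)=k^2-1$, so the correction is no longer a small perturbation of $1-u^2$ and the standard cancellation scheme (which is set up for the symmetric range) does not apply. Second, $\psi$ is not ``engineered so that the integral evaluates to $\pi$'': the actual mechanism is that $\psi$ is odd, so pairing $t$ with $-t$ and using convexity of $x\mapsto x^{-1/2}$ gives
\begin{equation*}
\int_{-1}^{1}\frac{dt}{\sqrt{1-t^2+a\psi(t)}}\;\geq\;\int_{-1}^{1}\frac{dt}{\sqrt{1-t^2}}\;=\;\pi,
\end{equation*}
an inequality rather than an identity. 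Both issues are repairable, but they are exactly the ``delicate technical heart'' you acknowledge skipping, so the proposal is an outline of the known proof rather than a proof.
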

\begin{remark}
Hang and Wang \cite{hang-wang} established that equality holds if and only if $M$ is isometric to $S^1$ with radius $\frac{D}{\pi}$.
\end{remark}

Furthermore, Kroger \cite{kroger} (see also Bakry-Qian \cite{bakry-qian}) gives a uniform approach to the above two results via a comparison method with a one-dimensional eigenvalue problem.
\begin{theorem}[\cite{kroger}]\label{onedim}
Let $(M,g)$ be a compact $n$-dimensional Riemannian manifold (possibly with a smooth convex boundary) with diameter $D$ and $\Ric \geq (n-1)K$ for $K\in \mathbb{R}$.  We assume Neumann boundary conditions if $\dd M \neq \emptyset$.  Then
\begin{align*}
\lambda_1 \geq \bar{\lambda}_1(n,K,D)
\end{align*}
where $\bar{\lambda}_1(n,K,D)$ is the first nonzero Neumann eigenvalue of the one-dimensional eigenvalue problem
\begin{align*}
\varphi''-(n-1)T_K\varphi'=-\bar{\lambda}\varphi
\end{align*}
on the interval $[-D/2,D/2]$.  Here the function $T_K$, $K \in \mathbb{R}$ is defined to be 
\begin{equation*}
T_K(x) := \begin{cases}
\sqrt{K}\tan(\sqrt{K}x), & K >0\\
0 & K = 0\\
-\sqrt{-K}\tanh(\sqrt{-K}x) & K <0.
\end{cases}
\end{equation*}
\end{theorem}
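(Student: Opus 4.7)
My plan is to follow the standard gradient comparison approach due to Kroger, refined by Bakry--Qian and Andrews--Clutterbuck. Let $u$ be a first nonzero eigenfunction, $\Delta u = -\lambda_1 u$, satisfying Neumann data if $\dd M \neq \emptyset$. After possibly replacing $u$ by $-u$ and rescaling, I normalize so that $\sup_M u = 1$ and $\inf_M u = -k$ with $0 < k \leq 1$. The idea is to compare $u$ with a one-dimensional model. I first construct a function $\varphi$ satisfying $\varphi'' - (n-1)T_K\varphi' = -\lambda_1\varphi$ on some interval $[a,b]$ with Neumann data $\varphi'(a)=\varphi'(b)=0$, image $\varphi([a,b]) = [-k,1]$, and $\varphi' > 0$ on $(a,b)$. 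Existence of such a $\varphi$ comes from a shooting argument in the initial value $a$: one solves forward with $\varphi(a)=-k$, $\varphi'(a)=0$ and tunes $a$ so that the first subsequent critical value of $\varphi$ equals $1$. The sign condition $\varphi'>0$ guarantees $\varphi$ is in fact the first nonzero Neumann eigenfunction on $[a,b]$.

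The heart of the proof is the pointwise gradient comparison
\begin{equation*}
|\nabla u|(x) \leq \varphi'\bigl(\varphi^{-1}(u(x))\bigr) \qquad \text{for all } x \in M.
\end{equation*}
Writing $\psi := \varphi' \circ \varphi^{-1}$ on $[-k,1]$, I consider $P := |\nabla u|^2 - \psi(u)^2$ and seek to rule out a positive interior maximum $x_0$. At such a point the maximum principle gives $\nabla P(x_0)=0$ and $\Delta P(x_0) \leq 0$. I apply the Bochner formula
\begin{equation*}
\tfrac{1}{2}\Delta |\nabla u|^2 = |\Hess u|^2 + \langle \nabla\Delta u, \nabla u\rangle + \Ric(\nabla u, \nabla u)
\end{equation*}
in an orthonormal frame $\{e_1,\ldots,e_n\}$ with $e_1$ parallel to $\nabla u$, apply the refined Cauchy--Schwarz splitting $|\Hess u|^2 \geq u_{11}^2 + (n-1)^{-1}(\Delta u - u_{11})^2 + 2\sum_{j\geq 2}u_{1j}^2$ (which is what produces the factor $n-1$), use $\Delta u = -\lambda_1 u$, $\Ric \geq (n-1)K$, and the ODE satisfied by $\psi$ obtained by differentiating the $\varphi$-equation. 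After a direct but careful computation, this forces $\Delta P(x_0) > 0$, a contradiction. The boundary case $x_0 \in \dd M$ is excluded via the Neumann condition together with strict convexity of $\dd M$: the second fundamental form produces the correct sign in $\dd_\nu|\nabla u|^2$ and disqualifies $x_0$ from being a boundary maximum of $P$.

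With the gradient comparison in hand, the conclusion is quick. Choose $p_\pm \in M$ with $u(p_-)=-k$, $u(p_+)=1$, and let $\gamma:[0,L]\to M$ be a minimizing geodesic joining them, so $L\leq D$. Define $t(s)$ by $\varphi(t(s)) = u(\gamma(s))$, which is well-defined since $\varphi$ is strictly increasing on $(a,b)$. Differentiating and using the gradient comparison,
\begin{equation*}
t'(s)\,\varphi'(t(s)) = \langle \nabla u(\gamma(s)), \gamma'(s)\rangle \leq |\nabla u| \leq \varphi'(t(s)),
\end{equation*}
hence $t'(s)\leq 1$ and $b-a = t(L)-t(0) \leq L \leq D$. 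Finally, the first nonzero Neumann eigenvalue of the model ODE is monotone decreasing in the length of the interval, so the existence of the admissible first eigenfunction $\varphi$ with eigenvalue $\lambda_1$ on an interval of length at most $D$ yields $\lambda_1 \geq \bar\lambda_1(n,K,D)$ as claimed.

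The main obstacle is the Bochner-based computation in the second paragraph: one must arrange the refined Cauchy--Schwarz splitting and the ODE for $\psi$ so that the drift term $(n-1)T_K \psi$ emerges exactly and the Ricci lower bound absorbs the residual curvature, producing a strict sign at the supposed interior maximum. The shooting construction of the model and the monotonicity of its eigenvalue in the interval length are pure ODE facts, but must be verified uniformly across the three cases $K>0$, $K=0$, $K<0$ since the nature of $T_K$ (and hence the Sturm--Liouville weight $e^{-(n-1)\int T_K}$) changes accordingly.
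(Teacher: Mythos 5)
The paper offers no proof of Theorem \ref{onedim}: it is imported verbatim from Kr\"oger \cite{kroger} (see also Bakry--Qian \cite{bakry-qian}) and used as a black box, so there is no internal argument to compare yours against. Your outline is precisely the standard gradient-comparison proof from that literature --- normalize $u$, build a one-dimensional model $\varphi$ by shooting, prove $|\nabla u| \leq \varphi'\circ\varphi^{-1}(u)$ by applying the maximum principle to $P = |\nabla u|^2 - \psi(u)^2$ with Bochner and the refined Cauchy--Schwarz splitting, then integrate along a minimizing geodesic to bound the model interval by $D$ --- and the architecture is correct.

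Three places where the sketch is genuinely thin rather than routine. First, the existence step: producing a solution of $\varphi'' - (n-1)T_K\varphi' = -\lambda_1\varphi$ whose range is exactly $[-k,1]$ with $k<1$, with $\varphi'>0$ between consecutive Neumann critical points, and (for $K>0$) whose interval stays inside $(-\tfrac{\pi}{2\sqrt K},\tfrac{\pi}{2\sqrt K})$, requires an a priori lower bound on $\lambda_1$ and a careful continuity/shooting analysis; the asymmetric range is exactly the delicate point in Bakry--Qian. Second, the entire content of the theorem sits inside your phrase ``a direct but careful computation'': one must derive the ODE for $\psi=\varphi'\circ\varphi^{-1}$ (namely $\psi\psi'=(n-1)T_K\circ\varphi^{-1}\cdot\psi-\lambda_1 t$), and verify that the curvature and Hessian terms combine with the correct sign; as written this is a statement of intent, not a proof. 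Third, your closing appeal to ``monotonicity of the eigenvalue in the length of the interval'' is insufficient because the drift $(n-1)T_K$ is position-dependent, so the model problem is not translation-invariant: what is actually needed is Kr\"oger's one-dimensional comparison lemma asserting that among all admissible intervals $[a,b]$ with $b-a\leq D$, the symmetric interval $[-D/2,D/2]$ minimizes the first nonzero Neumann eigenvalue. That lemma needs its own (Sturm-type) proof in each of the three regimes of $K$. None of these is a wrong turn, but each must be filled in before the outline becomes a proof.
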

The above is indeed a uniformization of Lichnerowicz and Zhong-Yang estimates since when $K=0$, the explicit solution is given by $\varphi(x) = \cos(\tfrac{\pi}{D}x)$. When $K>0$, we can consider the model when $D=\frac{\pi}{\sqrt{K}}$ so that the eigenfunction is given by $\varphi(x) = \sin(\sqrt{K}x)$ which has the eigenvalue $\bar{\lambda} = nK$.

By interpolating linearly in $K$, the Li conjecture states that $\lambda_1 \geq \frac{\pi^2}{D^2}+(n-1)K$.  However the Li conjecture is false and can be shown by computing an asymptotic expansion of the eigenvalue.  By analyzing the one-dimensional eigenvalue problem in Theorem \ref{onedim}, Shi and Zhang show that instead of $(n-1)$, the lower bound involving both the diameter and curvature is the following.
\begin{theorem}[Shi-Zhang \cite{shi-zhang}]
On compact Riemannian manifold with $\Ric \geq (n-1)K$, $\diam(M)=D$, $K\in \mathbb{R}$, the first non-zero eigenvalue of the Laplacian satisfies
\begin{equation*}
\lambda_1 \geq 4(s-s^2)\frac{\pi^2}{D^2}+s(n-1)K
\end{equation*}
for $0< s < 1$. 
\end{theorem}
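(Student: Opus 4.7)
The plan is to combine Kr\"oger's comparison result (Theorem \ref{onedim}) with a Moser-style multiplier argument and Wirtinger's inequality. The first step is to invoke Theorem \ref{onedim} so that it suffices to prove the bound for $\mu := \bar\lambda_1(n,K,D)$, the first nonzero Neumann eigenvalue of the one-dimensional ODE $\varphi'' - (n-1)T_K\varphi' + \mu\varphi = 0$ on $[-D/2, D/2]$. By Sturm-Liouville theory the eigenfunction $\varphi$ has a unique interior zero $x_0$ and is monotone on each nodal interval; since $T_K$ is odd, after possibly replacing $\varphi$ with its reflected, sign-flipped copy, I may assume $\varphi > 0$ on $(x_0, D/2]$ with length $L := D/2 - x_0 \leq D/2$.

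For each $s \in (0,1)$, set $p = 1/(2s) > 1/2$, multiply the ODE by $\varphi^{2p-1}$, and integrate over $(x_0, D/2)$. The boundary contributions from integration by parts vanish because $\varphi(x_0) = 0$ (using $2p-1 > 0$) and $\varphi'(D/2) = 0$, while the key identity $T_K' = K + T_K^2$ lets one handle the middle term. The result is an identity of the schematic form
\[
\bigl(\mu + s(n-1)K\bigr)\int \varphi^{2p}\,dx + s(n-1)\int T_K^2\varphi^{2p}\,dx = (2p-1)\int \varphi^{2p-2}(\varphi')^2\,dx + s(n-1)\,T_K(D/2)\varphi(D/2)^{2p}.
\]
The Moser substitution $u = \varphi^p$ converts $(2p-1)\int \varphi^{2p-2}(\varphi')^2 = 4s(1-s)\int (u')^2$, and since $u(x_0) = 0$ and $u'(D/2) = 0$, Wirtinger's inequality (Lemma \ref{wirt}), applied to the even reflection of $u$ across $D/2$, gives $\int (u')^2 \geq \pi^2/(2L)^2\int u^2 \geq \pi^2/D^2\int u^2$ because $L \leq D/2$. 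This contributes exactly the $4s(1-s)\pi^2/D^2$ factor in the target lower bound.

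The main obstacle will be the delicate sign analysis of the remaining curvature terms, namely the bulk piece $s(n-1)\int T_K^2\varphi^{2p}$, the boundary contribution $s(n-1)T_K(D/2)\varphi(D/2)^{2p}$, and their interplay with the $s(n-1)K\int\varphi^{2p}$ already sitting on the left-hand side. The natural strategy is to apply $T_K' = K + T_K^2$ in the reverse direction to recast the bulk $T_K^2$ integral as a boundary term minus $2p\int T_K\varphi^{2p-1}\varphi'$ minus $K\int\varphi^{2p}$, and then track signs using the monotonicity $\varphi' \geq 0$ on $(x_0, D/2)$, the sign of $T_K$ on $[0, D/2]$ (nonnegative when $K \geq 0$, bounded in absolute value by $\sqrt{|K|}$ when $K < 0$), and the a priori bound $T_K^2 \leq |K|$ in the hyperbolic case. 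Verifying that the net cancellation produces precisely the advertised $+ s(n-1)K$ correction (rather than a suboptimal shift) is the delicate step that distinguishes this argument from the Wirtinger-only proof, which would only recover the Zhong-Yang bound at $s = 1/2$. Once this estimate is in place for every $s$, the theorem follows, and the optimal lower bound is obtained by taking the supremum over $s \in (0,1)$.
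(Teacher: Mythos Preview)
Your overall architecture---Kr\"oger comparison, a Moser-type multiplier, then Wirtinger---is exactly the Shi--Zhang scheme the paper follows, but you are missing the one structural step that makes the curvature term come out with the correct sign: the paper \emph{differentiates} the one-dimensional ODE before multiplying and integrating. Writing $v=\varphi'$, the Neumann problem becomes the Dirichlet problem
\[
v'' - (n-1)T_K\,v' \;=\; (n-1)T_K'\,v - \mu v, \qquad v(\pm D/2)=0,
\]
on the full interval. The point is that differentiation manufactures the term $(n-1)T_K' v$ on the right-hand side. After multiplying by $v^{a-1}$ and integrating, the drift term $-(n-1)\int T_K v^{a-1}v'$ integrates by parts to $+\tfrac{n-1}{a}\int T_K' v^a$ with no boundary contribution, so the two $T_K'$ contributions combine with net coefficient $(1-\tfrac{1}{a})=\tfrac{a-1}{a}>0$. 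Now the elementary bound $T_K'=K+T_K^2\ge K$ (this is the paper's Lemma on $S_K\ge K$) gives directly
\[
\mu\int w^2 \;\ge\; \frac{4(a-1)}{a^2}\int(w')^2 \;+\; \frac{(n-1)(a-1)}{a}\,K\int w^2,\qquad w=v^{a/2},
\]
and Wirtinger on the length-$D$ interval finishes with $s=(a-1)/a$.

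In your version you multiply the \emph{undifferentiated} equation by $\varphi^{2p-1}$ on one nodal half-interval. Only a single $T_K'$ term is produced (from integrating $T_K(\varphi^{2p})'$ by parts), and your own identity already displays the problem: it reads $(\mu+s(n-1)K)\int u^2 + s(n-1)\int T_K^2 u^2 = 4s(1-s)\int(u')^2 + s(n-1)T_K(D/2)u(D/2)^2$, so after Wirtinger the curvature enters as $-s(n-1)K$, not $+s(n-1)K$, together with an uncontrolled boundary value and a $-\int T_K^2 u^2$ of the wrong sign. Your proposed remedy---rewriting $\int T_K^2 u^2$ via $T_K'=K+T_K^2$ and integrating by parts ``in reverse''---literally undoes the integration by parts you just performed and is circular; it cannot generate the missing source term. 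There is no delicate sign-tracking that rescues this: the second copy of $T_K'$, coming from differentiating the ODE, is essential, and once you insert that step your nodal-interval and reflection gymnastics become unnecessary as well, since $v=\varphi'$ already vanishes at both endpoints of the full interval.
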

By choosing $s=\frac{1}{2}$, we get $\lambda_1 \geq \frac{\pi^2}{D^2}+\frac{(n-1)}{2}K$.   See also \cite{futaki-li-li} for an extension of this result to the setting of Bakry-\'Emery manifolds and the Witten-Laplacian.

\subsection{K\"ahler manifolds}
Let $(M,g,J)$ be a complex manifold of complex dimension $m$.  $M$ is called a \textit{K\"ahler} manifold if the K\"ahler form defined by $\omega(X,Y) = g(JX,Y)$ is a closed form, i.e. $d\omega = 0$.  Here $J$ is the complex structure of $J$.  Moreover, on K\"ahler manifolds, we have several curvature operators which arise using the complex structure.  We say a plane $P\subset T_pM$ is holomorphic if it is invariant under the complex structure $J$.  Suppose $P=\Span\{X,JX\}$ for some $X \in T_pM$.  The \textit{holomorphic sectional curvature of $P$}
\begin{align*}
H(P) :=H(X)= \frac{R(X,JX,X,JX)}{|X|^4}.
\end{align*}
We say the holomorphic sectional curvature is bounded from below by $K\in \mathbb{R}$, written $H\geq K$, if $H(P) \geq K$ for all holomorphic planes $P\subset T_pM$ and all $p \in M$. 

The \textit{orthogonal Ricci curvature} $\Ric^\perp$ is defined for any $X \in T_pM$ by
\begin{align*}
\Ric^\perp(X,X) = \Ric(X,X)-H(X)|X|^2.
\end{align*}

Ni and Zheng studied this curvature in \cite{ni-zheng} and in particular proved a Laplace comparison theorem under an orthogonal Ricci curvature lower bound.  


With the additional K\"ahler structure, the eigenvalue estimates of the Laplacian can be improved.  This was observed by Lichnerowicz \cite{lichnerowicz} and in particular, the Lichnerowicz estimate becomes

\begin{theorem}[K\"ahler Lichnerowicz estimate \cite{lichnerowicz}]
Let $(M,g,J)$ be an $n=2m$ dimensional closed K\"ahler manifold with $\Ric \geq (n-1)K>0$.  Then 
\begin{align*}
\lambda_1 \geq 2(n-1)K.
\end{align*}
\end{theorem}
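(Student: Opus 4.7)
The plan is to apply the Bochner--Kodaira formula to the $(0,1)$-form $\bar\partial u$, where $u$ is a first eigenfunction with $\Delta u = -\lambda u$. The Kähler hypothesis enters decisively in two places. First, on a Kähler manifold the $\bar\partial$-Laplacian $\Box_{\bar\partial} := \bar\partial\bar\partial^* + \bar\partial^*\bar\partial$ on functions equals $-\tfrac{1}{2}\Delta$, so applying $\bar\partial$ intertwines the scalar eigenvalue problem with an eigenform problem on $(0,1)$-forms. Second, the Weitzenböck curvature operator on $(0,1)$-forms on a Kähler manifold is precisely the Ricci endomorphism, so the Bochner term is controlled directly by the hypothesis $\Ric \geq (n-1)K$ without needing any finer curvature information.

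First I would verify (or cite) the Kähler identity $\Box_{\bar\partial} u = -\tfrac{1}{2}\Delta u$ for smooth real functions, together with the commutation $\Box_{\bar\partial}\bar\partial = \bar\partial\Box_{\bar\partial}$, which follows formally from $\bar\partial^2 = 0$. Combining these with $\Delta u = -\lambda u$ gives $\Box_{\bar\partial}(\bar\partial u) = \tfrac{\lambda}{2}\bar\partial u$, so $\alpha := \bar\partial u$ is an eigenform of $\Box_{\bar\partial}$ with eigenvalue $\lambda/2$.

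Next I would pair the Bochner--Kodaira formula $\Box_{\bar\partial}\alpha = \nabla^*\nabla\alpha + \Ric\cdot\alpha$ with $\alpha$ and integrate over the closed manifold $M$:
\begin{equation*}
\frac{\lambda}{2}\int_M |\bar\partial u|^2 \, dV = \int_M |\nabla \bar\partial u|^2 \, dV + \int_M \Ric(\bar\partial u, \partial u)\, dV.
\end{equation*}
The hypothesis $\Ric \geq (n-1)K$ gives $\Ric(\bar\partial u, \partial u) \geq (n-1)K\,|\bar\partial u|^2$ pointwise, and discarding the non-negative rough-Laplacian term then yields $\tfrac{\lambda}{2}\int_M|\bar\partial u|^2 \geq (n-1)K \int_M |\bar\partial u|^2$. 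Since $u$ is non-constant, $\int_M|\bar\partial u|^2 > 0$, and dividing gives $\lambda \geq 2(n-1)K$, as required.

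The main obstacle is purely bookkeeping: the normalizations must be kept consistent across three different Laplace-type operators ($\Delta$, the Hodge Laplacian $\Box_d$, and $\Box_{\bar\partial}$), and the various Hermitian pairings on $(0,1)$-forms must align with the Riemannian pairings appearing in the curvature hypothesis. All of these identifications rest crucially on the Kähler condition $d\omega = 0$, which kills the torsion contributions and produces the clean proportionality $\Box_d = 2\Box_{\bar\partial}$ on functions; without it the argument picks up extra terms and fails to produce the sharp factor of two that distinguishes this estimate from the Riemannian Lichnerowicz bound $\lambda_1 \geq nK$.
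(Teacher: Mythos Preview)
The paper does not supply its own proof of this theorem; immediately after stating it the authors write ``A proof of this can be found for instance in \cite{blacker-seto} or \cite{guedj}'' and move on, so there is nothing in the paper to compare your argument against directly.

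Your Bochner--Kodaira approach is the standard one and is essentially what appears in those cited references. The outline is correct: on a K\"ahler manifold $\Box_{\bar\partial} = -\tfrac{1}{2}\Delta$ on functions, so $\alpha=\bar\partial u$ is a $\Box_{\bar\partial}$-eigenform with eigenvalue $\lambda/2$; the Weitzenb\"ock curvature term on $(0,1)$-forms is exactly the Ricci endomorphism; and discarding the nonnegative second-order term yields $\tfrac{\lambda}{2}\geq (n-1)K$. One correction worth flagging: the identity you wrote, $\Box_{\bar\partial}\alpha = \nabla^{*}\nabla\alpha + \Ric\cdot\alpha$, is not literally right with the full connection Laplacian on the right-hand side. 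Indeed, for real $1$-forms one already has the Riemannian Weitzenb\"ock formula $\Box_{d}=\nabla^{*}\nabla+\Ric$, and combined with the K\"ahler identity $\Box_{d}=2\Box_{\bar\partial}$ this is inconsistent with your version. The correct statement on $(0,1)$-forms is
\[
(\Box_{\bar\partial}\alpha)_{\bar k} \;=\; -\,g^{j\bar\ell}\nabla_{j}\nabla_{\bar\ell}\,\alpha_{\bar k} \;+\; R_{\bar k}{}^{\bar m}\alpha_{\bar m},
\]
i.e.\ the second-order piece is the \emph{partial} (mixed-type) Laplacian $-g^{j\bar\ell}\nabla_{j}\nabla_{\bar\ell}$, not $\nabla^{*}\nabla$. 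This does no damage to your argument, since after pairing with $\alpha$ and integrating by parts that term is still $\|\nabla^{0,1}\alpha\|^{2}\geq 0$ and can be dropped; your own caveat about normalization bookkeeping anticipates exactly this issue.
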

A proof of this can be found for instance in \cite{blacker-seto} or \cite{guedj}.  Recently, Li and Wang established the K\"ahler analogue of Theorem \ref{onedim} which reflects more on the K\"ahler structure by imposing conditions on the holomorphic sectional and orthogonal Ricci curvature.

\begin{theorem}\label{liwang}[Li-Wang \cite{li-wang}]
Let $(M^m,g,J)$ be a compact K\"ahler manifold of complex dimension $m$ and diameter $D$ whose holomorphic sectional curvature is bounded from below by $H \geq 4\kappa_1$ and orthogonal Ricci curvature is bounded from below by $\Ric^\perp \geq 2(m-1)\kappa_2$ for some $\kappa_1,\kappa_2 \in \mathbb{R}$.  Let $\lambda_1$ be the first nonzero eigenvalue of the Laplacian on $M$ (with Neumann boundary condition if $M$ has a strictly convex boundary).  Then
\begin{align*}
\lambda_1 \geq \bar{\lambda}_1(m,\kappa_1,\kappa_2,D),
\end{align*}
where $\bar{\lambda}_1(m,\kappa_1,\kappa_2,D)$ is the first Neumann eigenvalue of the one-dimensional eigenvalue problem
\begin{equation}\label{li-wang-eqn}
\varphi''-(2(m-1)T_{\kappa_2}+T_{4\kappa_1})\varphi'=-\mu\varphi
\end{equation}
on $[-D/2,D/2]$.  
\end{theorem}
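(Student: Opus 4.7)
I propose to prove this via a gradient comparison in the spirit of Kröger and Bakry--Qian, refined so that the Kähler structure picks out the $J\nabla u$ direction via the holomorphic sectional curvature while the remaining $2(m-1)$ real directions are aggregated by the orthogonal Ricci. Let $u$ be a first eigenfunction with $\Delta u = -\lambda_1 u$ (with Neumann condition on $\partial M$ if nonempty), and let $\varphi \in C^\infty([-D/2,D/2])$ be the first Neumann eigenfunction of \eqref{li-wang-eqn} with eigenvalue $\bar\mu = \bar\lambda_1$, which may be taken strictly monotone. Scale $u$ so that its range lies inside that of $\varphi$ and set $t := \varphi^{-1}\circ u: M \to [-D/2,D/2]$. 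The core claim is the pointwise gradient comparison $|\nabla u|^2 \leq \varphi'(t)^2$: granted this, integrating $|\nabla t| \leq 1$ along a minimizing geodesic connecting points where $u$ attains its maximum and minimum forces $t$ to span an interval of length at most $D$, whence $\lambda_1 \geq \bar\lambda_1$.

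To establish the gradient comparison, suppose for contradiction that $P := |\nabla u|^2 - \varphi'(t)^2$ attains a positive maximum at some $x_0 \in M$. If $x_0 \in \partial M$, strict convexity of $\partial M$ together with the Neumann condition yields a contradiction via the Hopf lemma applied to $|\nabla u|^2$. At an interior maximum $\nabla P(x_0) = 0$ and $\Delta P(x_0) \leq 0$; the contradiction comes from deriving the reverse strict inequality using Bochner's formula
\[
\tfrac12 \Delta |\nabla u|^2 = |\Hess u|^2 - \lambda_1 |\nabla u|^2 + \Ric(\nabla u,\nabla u).
\]
At $x_0$ choose a unitary Kähler frame $\{e_1, J e_1, \ldots, e_m, J e_m\}$ with $e_1 = \nabla u/|\nabla u|$. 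Since $J$ is parallel, the Hessian respects the $J$-decomposition, and Cauchy--Schwarz on the $2(m-1)$ orthogonal diagonal entries combined with $\Delta u = -\lambda_1 u$ yields
\[
|\Hess u|^2 \geq u_{11}^2 + u_{J1,J1}^2 + 2u_{1,J1}^2 + \frac{1}{2(m-1)}\bigl(\lambda_1 u + u_{11} + u_{J1,J1}\bigr)^2.
\]
The curvature term splits as $\Ric(\nabla u,\nabla u) = H(\nabla u)|\nabla u|^2 + \Ric^\perp(\nabla u,\nabla u) \geq \bigl(4\kappa_1 + 2(m-1)\kappa_2\bigr)|\nabla u|^2$, with the $4\kappa_1$ contribution naturally tied to the $(e_1, Je_1)$ holomorphic plane and the $2(m-1)\kappa_2$ contribution to its $J$-invariant orthogonal complement, mirroring the Hessian split.

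Using $\nabla P(x_0) = 0$ to express the Hessian entries $u_{11}, u_{J1,J1}, u_{1,J1}$ in terms of $\varphi'(t), \varphi''(t)$ and the first-order data of $t$, and invoking the identity $\varphi'' = \bigl(2(m-1)T_{\kappa_2}+T_{4\kappa_1}\bigr)\varphi' - \bar\mu\varphi$ satisfied by $\varphi$, the inequality $\Delta P(x_0) \leq 0$ reduces to a Riccati-type inequality in which the one-dimensional model corresponds to equality. The strict positivity of $P(x_0)$ violates this inequality, producing the required contradiction and hence $(\ast)$.

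\emph{Main obstacle.} The crux lies in the Kähler refinement of the Bochner step: the classical Riemannian proof of Theorem \ref{onedim} treats all $n-1$ directions orthogonal to $\nabla u$ symmetrically and yields a single drift coefficient $(n-1)T_K$. In the Kähler case $J\nabla u$ must be separated from the remaining $2(m-1)$ orthogonal real directions so that, after the Cauchy--Schwarz manipulation of the Hessian and the matching curvature split, the resulting Riccati inequality carries the drift $2(m-1)T_{\kappa_2} + T_{4\kappa_1}$ rather than a coarser combination obtained by applying the Riemannian comparison to the ambient $2m$-dimensional metric. Allocating the Hessian quadratic and the curvature terms into the two matching pieces--and controlling the off-diagonal term $u_{1,J1}$ using $\nabla P(x_0) = 0$--is the technical heart of the argument.
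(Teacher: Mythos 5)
First, note that the paper does not prove this statement: Theorem \ref{liwang} is quoted verbatim from Li--Wang \cite{li-wang} and used as a black box, so there is no internal proof to compare against. Judged on its own terms, your proposal follows the single-point gradient-comparison route of Kr\"oger and Bakry--Qian, whereas Li--Wang's actual argument runs through the two-point modulus-of-continuity method of Andrews--Clutterbuck combined with a K\"ahler second-variation/Laplacian comparison in the spirit of Ni--Zheng. This difference is not cosmetic, and it is exactly where your sketch has a genuine gap.

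The gap is the step you yourself flag as the ``technical heart.'' At an interior maximum of $P=|\nabla u|^2-\varphi'(t)^2$, the condition $\nabla P(x_0)=0$ gives $\Hess u(\nabla u,X)=\varphi''(t)\langle\nabla u,X\rangle$ for all $X$, hence $u_{11}=\varphi''(t)$ and $u_{1j}=0$ for $j\neq 1$ (so in particular the off-diagonal term $u_{1,J1}$ you propose to ``control'' is simply zero). But this first-order information says nothing about $u_{J1,J1}$: that entry is a free parameter at $x_0$, and there is no pointwise identity tying it to $T_{4\kappa_1}\varphi'$. If you minimize your proposed lower bound $u_{J1,J1}^2+\frac{1}{2(m-1)}(\lambda_1 u+u_{11}+u_{J1,J1})^2$ over the unconstrained value of $u_{J1,J1}$, it collapses to $\frac{1}{2m-1}(\lambda_1 u+u_{11})^2$, i.e.\ to the ordinary Riemannian Cauchy--Schwarz. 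Since the curvature term only enters through $\Ric(\nabla u,\nabla u)\geq(4\kappa_1+2(m-1)\kappa_2)|\nabla u|^2$, the whole argument then degenerates to Theorem \ref{onedim} applied to the $2m$-dimensional Riemannian manifold with an averaged Ricci bound --- precisely the ``coarser combination'' you say you want to avoid --- and cannot produce the drift $2(m-1)T_{\kappa_2}+T_{4\kappa_1}$ of \eqref{li-wang-eqn}. The auxiliary claim that ``since $J$ is parallel, the Hessian respects the $J$-decomposition'' is also false for a general eigenfunction: parallelism of $J$ does not make $\Hess u$ commute with $J$. The reason the two-point method succeeds where this fails is that there the second-order quantity being estimated is an index form integrated along a minimizing geodesic $\gamma$, and one may choose variation fields adapted to the parallel frame $\{\gamma',J\gamma',\dots\}$, so the holomorphic plane $(\gamma',J\gamma')$ and its $J$-invariant complement genuinely receive separate curvature contributions; no pointwise identity for $u_{J1,J1}$ is ever needed. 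A secondary, more standard gap: the passage from the gradient bound to $\lambda_1\geq\bar\lambda_1$ requires a comparison of the ranges (maxima and minima) of $u$ and $\varphi$, which in Kr\"oger's and Bakry--Qian's work is a separate nontrivial step, not just a rescaling.
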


\section{Computation of the lower bound}
\subsection{Proof of Theorem \ref{main}}
We first define the function $S_K$. Let 
\begin{align*}
    S_K(x) := 
    \begin{cases}
    K \sec^2(\sqrt{K}x) & K > 0 \\
    0 & K = 0 \\
    K \sech^2(\sqrt{-K}x) & K < 0.
    \end{cases}
\end{align*}
Verify that $(T_K)' = S_K$. Now we establish the following lemmas.
\begin{lemma}
\label{int part}
Let $v$ be a function such that $v(-\frac{D}{2}) = v(\frac{D}{2}) = 0$. Then for any $K\in \mathbb{R}$ and $a>1$, 
\begin{align*} 
    \int_{-D/2}^{D/2} T_K \cdot v^{a-1}v' = -\frac{1}{a}\int_{-D/2}^{D/2} \text{S}_K \cdot (v^\frac{a}{2})^2.
\end{align*}
\end{lemma}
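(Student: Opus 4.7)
The plan is straightforward integration by parts, combined with the power rule to rewrite the integrand in a form where the antiderivative relation $(T_K)' = S_K$ can be exploited.

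First I would rewrite the integrand using the chain rule: since $a > 1$,
\begin{equation*}
v^{a-1} v' = \frac{1}{a}(v^a)',
\end{equation*}
so that
\begin{equation*}
\int_{-D/2}^{D/2} T_K \cdot v^{a-1} v' \, dx = \frac{1}{a}\int_{-D/2}^{D/2} T_K \cdot (v^a)' \, dx.
\end{equation*}

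Next I would integrate by parts, using the identity $(T_K)' = S_K$ already established. The boundary contribution is
\begin{equation*}
\frac{1}{a}\bigl[T_K(x) v(x)^a\bigr]_{-D/2}^{D/2},
\end{equation*}
which vanishes because $v(\pm D/2) = 0$ (and $T_K$ is bounded on the interval, under the implicit assumption $D < \pi/\sqrt{\kappa}$ when $K = \kappa > 0$ that keeps $T_K$ nonsingular on $[-D/2,D/2]$). Thus we are left with
\begin{equation*}
\frac{1}{a}\int_{-D/2}^{D/2} T_K \cdot (v^a)' \, dx = -\frac{1}{a}\int_{-D/2}^{D/2} S_K \cdot v^a \, dx.
\end{equation*}

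Finally, I would rewrite $v^a = (v^{a/2})^2$ to obtain exactly the right-hand side of the claimed identity. The computation requires no case distinction on the sign of $K$ since the relation $(T_K)' = S_K$ holds uniformly and the boundary term vanishes in each case for the same reason. Really the only subtle point — and arguably the main thing to check — is the regularity/integrability of $T_K$ on the interval, which is automatic when $K \le 0$ and follows from the Myers-type diameter restriction implicit in the setting of Theorem \ref{liwang} when $K > 0$.
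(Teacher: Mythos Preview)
Your proof is correct and follows exactly the same approach as the paper: rewrite $v^{a-1}v' = \tfrac{1}{a}(v^a)'$, integrate by parts using $(T_K)' = S_K$ with the boundary term vanishing since $v(\pm D/2)=0$, and then write $v^a = (v^{a/2})^2$. Your remark about the finiteness of $T_K$ on the interval when $K>0$ is a reasonable extra observation that the paper leaves implicit.
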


For the remainder of this section, every integral will be evaluated from $-\frac{D}{2}$ to $\frac{D}{2}$. Thus for the sake of brevity, we will suppress the bounds of each integral.

\begin{proof}
\begin{align*}
    \int T_K \cdot v^{a-1}v' & = \frac{1}{a}\int T_k \cdot (v^a)' \\
    & = -\frac{1}{a}\int (T_K)'\cdot v^a \\ 
    & = -\frac{1}{a}\int S_K \cdot (v^\frac{a}{2})^2.
\end{align*} 
The second line follows from integration by parts. The boundary term of integration by parts vanishes because of $v = 0$ on the boundary of $[-\frac{D}{2},\frac{D}{2}]$. 
\end{proof}

\begin{lemma}
\label{bounds}
Let $w$ be any function, then for any $K$
\begin{align*}
    \int S_K \cdot w^2 \geq K\int w^2.
\end{align*}
\end{lemma}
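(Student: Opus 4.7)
The plan is to prove the inequality pointwise, namely to show that $S_K(x) \geq K$ for every $x$ in the interval $[-D/2, D/2]$, and then multiply by $w^2 \geq 0$ and integrate. Once the pointwise bound is in hand, the rest is automatic, so there is really only one thing to check, and it splits naturally into three cases based on the sign of $K$.

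First I would dispose of the trivial case $K = 0$: here $S_K \equiv 0$ and the right-hand side is also $0$, so the inequality is an equality. Next, for $K > 0$, I would use the elementary fact that $\sec^2(\theta) \geq 1$ for all real $\theta$ where it is defined, so that
\begin{equation*}
S_K(x) = K\sec^2(\sqrt{K}x) \geq K,
\end{equation*}
which upon multiplication by $w^2$ and integration yields the claim. (One implicitly uses the standing assumption from Theorem~\ref{liwang} that $D$ is small enough that $S_{4\kappa_1}$ and $T_{4\kappa_1}$ are defined on $[-D/2,D/2]$, so $\sqrt{K}x$ stays strictly inside $(-\pi/2, \pi/2)$.)

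Finally, for $K < 0$, the key observation is that $\sech^2(\theta) \leq 1$ for all real $\theta$; combined with $K < 0$ this reverses the inequality and gives
\begin{equation*}
S_K(x) = K\sech^2(\sqrt{-K}x) \geq K,
\end{equation*}
so again the pointwise bound holds and we integrate against $w^2 \geq 0$. There is no real obstacle here: the lemma is essentially the statement that the multiplier $\sec^2$ (respectively $\sech^2$) appearing in $S_K$ is bounded below by $1$ (respectively above by $1$), which is why the author introduced $S_K$ in this particular form — to make $S_K \geq K$ automatic and so provide a clean lower bound that removes the dependence on $T_K$ and $x$ in subsequent estimates.
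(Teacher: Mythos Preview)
Your proof is correct and follows essentially the same approach as the paper: establish the pointwise inequality $S_K \geq K$ by the three-case split on the sign of $K$, then multiply by $w^2 \geq 0$ and integrate. Your version is slightly more careful in noting the domain issue for $\sec^2$ when $K>0$ and in writing $\sqrt{-K}$ rather than $\sqrt{K}$ in the $K<0$ case, but the argument is the same.
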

\begin{proof} We first show that $S_K \geq K$.

 Case 1. $K > 0$. Recall $\sec^2(\sqrt{K}x) \geq 1$. Then 
\begin{align*}
S_K = K \sec^2(\sqrt{K}x) \geq K.    
\end{align*}

Case 2. $K=0$ is immediate.

Case 3. $K < 0$. Recall $\sech^2(\sqrt{K}x) \leq 1$. Then 
\begin{align*}
S_K = K \sech^2(\sqrt{K}x) \geq K.    
\end{align*}
The lemma follows from $S_k\cdot w^2 \geq Kw^2$.
\end{proof}

By Theorem \ref{liwang}, it suffices to find a lower bound for $\mu$ in the ODE 
\begin{align*}
    \begin{cases}
    \phi'' - (2(m-1)T_{\kappa_2} + T_{4\kappa_1})\phi' = -\mu \phi & 
    \text{ on } [-\frac{D}{2}, \frac{D}{2}] \\
    \phi'(-\frac{D}{2}) = \phi'(\frac{D}{2}) = 0.
    \end{cases}
\end{align*}
If we expand the ODE and take a single derivative of both sides we get 
\begin{align*}
    \phi''' - 2(m-1)\Big{[}(T_{\kappa_2})'\cdot \phi' + T_{\kappa_2}\cdot \phi''\Big{]} - \Big{[}(T_{4\kappa_1})'\cdot \phi' + T_{4\kappa_1}\cdot \phi''\Big{]} & = -\mu \phi'
\end{align*}
which is equivalent to 
\begin{align*}
    \phi''' - 2(m-1)T_{\kappa_2}\cdot \phi'' - T_{4\kappa_1}\cdot \phi'' & =
    (T_{\kappa_2})'\cdot \phi' + (T_{4\kappa_1})'\cdot \phi' -\mu \phi'.
\end{align*}
Now let $v = \phi'$. Then $v$ solves the ODE with Dirichlet conditions
\begin{equation}\label{ode}
    \begin{cases}
    v'' - 2(m-1)T_{\kappa_2}\cdot v' - T_{4\kappa_1}\cdot v' =
    (T_{\kappa_2})'\cdot v + (T_{4\kappa_1})'\cdot v -\mu v 
    & \text{ on } [-\frac{D}{2}, \frac{D}{2}]\\
    v(-\frac{D}{2}) = v(\frac{D}{2}) = 0.
    \end{cases}
\end{equation}

Multiply both sides of the ODE in \eqref{ode} by $v^{a-1}$ for some $a>1$ and integrate both sides from $-\frac{D}{2}$ to $\frac{D}{2}$. Then
\begin{multline}\label{inteq1}
    \int v^{a-1}v'' -2(m-1)\int T_{\kappa_2}\cdot v^{a-1}v' - \int T_{4\kappa_1}\cdot v^{a-1}v' \\ = 2(m-1)\int (T_{\kappa_2})'\cdot v^a + \int (T_{4\kappa_1})'\cdot v^a - \mu\int v^a.
\end{multline}

Consider the first term of this equation. We can use integration by parts to get
\begin{equation}\label{inteq2}
\begin{aligned}
    \int v^{a-1}v'' & = -(a-1)\int v^{a-2}(v')^2 \\
    & = -(a-1)\int (v^{\frac{a}{2} - 1}v')^2 \\
    & = -\frac{4(a-1)}{a^2}\int ((v^{\frac{a}{2}})')^2.
\end{aligned}
\end{equation}
In the first line, the boundary terms of the integration by parts vanish because of the Dirichlet conditions in \eqref{ode}. 

Now from \eqref{inteq1}, we use \eqref{inteq2}, Lemma \ref{int part} and the fact that $(T_K)' = S_K$ to get

\begin{multline*}
    -\frac{4(a-1)}{a^2}\int ((v^{\frac{a}{2}})')^2 + \frac{2(m-1)}{a}\int S_{\kappa_2}\cdot (v^\frac{a}{2})^2 + \frac{1}{a}\int S_{4\kappa_1}\cdot (v^\frac{a}{2})^2 \\
    = 2(m-1) \int S_{\kappa_2}\cdot (v^\frac{a}{2})^2 + \int S_{4\kappa_1}\cdot (v^\frac{a}{2})^2  - \mu\int (v^\frac{a}{2})^2.
\end{multline*}
Set $w = v^\frac{a}{2}$ and combine terms to get 
\begin{equation}\label{itereq}
    -\frac{4(a-1)}{a^2}\int (w')^2 = \frac{2(m-1)(a-1)}{a}\int S_{\kappa_2}\cdot w^2 + \frac{(a-1)}{a}\int S_{4\kappa_1}\cdot w^2 - \mu\int w^2.
\end{equation}

Note that the coefficients to the first two integrals on the right hand side of \eqref{itereq} are positive.  Then by using Lemma \ref{bounds} and combining like terms we get 
\begin{align*}
    -\frac{4(a-1)}{a^2}\int (w')^2 \geq 
    \Big{(} \frac{2\kappa_2 (m-1)(a-1) + 4\kappa_1 (a-1)}{a} - \mu\Big{)}\int w^2
\end{align*}
Now multiply both sides by $-1$ and rearrange to get. 
\begin{equation}\label{rayleigh}
    \frac{\int (w')^2}{\int w^2} \leq \frac{\mu - \frac{2\kappa_2 (m-1)(a-1) + 4\kappa_1 (a-1)}{a}}{\frac{4(a-1)}{a^2}}.
\end{equation}

Note that because $v$ satisfies the Dirichlet condition in \eqref{ode} we also have that $w(-\frac{D}{2}) = w(\frac{D}{2}) = 0$ by our definition of $w$. Thus, we can use Wirtinger's inequality (Lemma \ref{wirt}) on the left hand side of \eqref{rayleigh} to get 
\begin{align*}
    \frac{\pi^2}{D^2} \leq \frac{\mu - \frac{2\kappa_2 (m-1)(a-1) + 4\kappa_1 (a-1)}{a}}{\frac{4(a-1)}{a^2}}.
\end{align*}
Solving for $\mu$ and simplifying, we get
\begin{align*}
    \mu \geq \frac{4(a - 1)}{a^2}\frac{\pi^2}{D^2} + \frac{2(a - 1)}{a}\Big{(}\kappa_2(m-1 + 2\kappa_1\Big{)}.
\end{align*}
Lastly, let $s = 1 - \frac{1}{a}$.  Then, substituting $s$ into the above inequality, we get
\begin{equation}\label{mainineq}
\mu \geq 4s(s-1)\frac{\pi^2}{D^2} + 2s(\kappa_2(m-1) + 2\kappa_1).
\end{equation}
Note that $s$ is an arbitrary number in the interval $(0,1)$ by our definition of $s$ and by the fact that $a>1$ was chosen arbitrarily.  Then \eqref{mainineq} holds for all such $s$.  Therefore
\begin{align*}
    \mu \geq \sup_{s\in (0,1)}\left\{4s(s-1)\frac{\pi^2}{D^2} + 2s(\kappa_2(m-1) + 2\kappa_1)\right\}.
\end{align*}

\begin{bibdiv}
\begin{biblist}

\bib{bakry-qian}{article}{
   author={Bakry, Dominique},
   author={Qian, Zhongmin},
   title={Some new results on eigenvectors via dimension, diameter, and
   Ricci curvature},
   journal={Adv. Math.},
   volume={155},
   date={2000},
   number={1},
   pages={98--153},
   issn={0001-8708},
   review={\MR{1789850}},
   doi={10.1006/aima.2000.1932},
}

\bib{blacker-seto}{article}{
   author={Blacker, Casey},
   author={Seto, Shoo},
   title={First eigenvalue of the $p$-Laplacian on K\"{a}hler manifolds},
   journal={Proc. Amer. Math. Soc.},
   volume={147},
   date={2019},
   number={5},
   pages={2197--2206},
   issn={0002-9939},
   review={\MR{3937693}},
   doi={10.1090/proc/14395},
}

\bib{futaki-li-li}{article}{
   author={Futaki, Akito},
   author={Li, Haizhong},
   author={Li, Xiang-Dong},
   title={On the first eigenvalue of the Witten-Laplacian and the diameter
   of compact shrinking solitons},
   journal={Ann. Global Anal. Geom.},
   volume={44},
   date={2013},
   number={2},
   pages={105--114},
   issn={0232-704X},
   review={\MR{3073582}},
   doi={10.1007/s10455-012-9358-5},
}

\bib{guedj}{article}{
   author={Guedj, Vincent},
   author={Kolev, Boris},
   author={Yeganefar, Nader},
   title={A Lichnerowicz estimate for the first eigenvalue of convex domains
   in K\"{a}hler manifolds},
   journal={Anal. PDE},
   volume={6},
   date={2013},
   number={5},
   pages={1001--1012},
   issn={2157-5045},
   review={\MR{3125547}},
   doi={10.2140/apde.2013.6.1001},
}

\bib{hang-wang}{article}{
   author={Hang, Fengbo},
   author={Wang, Xiaodong},
   title={A remark on Zhong-Yang's eigenvalue estimate},
   journal={Int. Math. Res. Not. IMRN},
   date={2007},
   number={18},
   pages={Art. ID rnm064, 9},
   issn={1073-7928},
   review={\MR{2358887}},
   doi={10.1093/imrn/rnm064},
}

\bib{kroger}{article}{
   author={Kr\"{o}ger, Pawel},
   title={On the spectral gap for compact manifolds},
   journal={J. Differential Geom.},
   volume={36},
   date={1992},
   number={2},
   pages={315--330},
   issn={0022-040X},
   review={\MR{1180385}},
}

\bib{li-wang}{article}{
   author={Li, Xiaolong},
   author={Wang, Kui},
   title={Lower bounds for the first eigenvalue of the Laplacian on K\"{a}hler
   manifolds},
   journal={Trans. Amer. Math. Soc.},
   volume={374},
   date={2021},
   number={11},
   pages={8081--8099},
   issn={0002-9947},
   review={\MR{4328692}},
   doi={10.1090/tran/8434},
}

\bib{lichnerowicz}{book}{
   author={Lichnerowicz, Andr\'{e}},
   title={G\'{e}om\'{e}trie des groupes de transformations},
   language={French},
   series={Travaux et Recherches Math\'{e}matiques, III},
   publisher={Dunod, Paris},
   date={1958},
   pages={ix+193},
   review={\MR{0124009}},
}

\bib{ni-zheng}{article}{
   author={Ni, Lei},
   author={Zheng, Fangyang},
   title={Comparison and vanishing theorems for K\"{a}hler manifolds},
   journal={Calc. Var. Partial Differential Equations},
   volume={57},
   date={2018},
   number={6},
   pages={Paper No. 151, 31},
   issn={0944-2669},
   review={\MR{3858834}},
   doi={10.1007/s00526-018-1431-x},
}

\bib{obata}{article}{
   author={Obata, Morio},
   title={Certain conditions for a Riemannian manifold to be isometric with
   a sphere},
   journal={J. Math. Soc. Japan},
   volume={14},
   date={1962},
   pages={333--340},
   issn={0025-5645},
   review={\MR{142086}},
   doi={10.2969/jmsj/01430333},
}

\bib{shi-zhang}{article}{
   author={Shi, Yu Min},
   author={Zhang, Hui Chun},
   title={Lower bounds for the first eigenvalue on compact manifolds},
   language={Chinese, with English and Chinese summaries},
   journal={Chinese Ann. Math. Ser. A},
   volume={28},
   date={2007},
   number={6},
   pages={863--866},
   issn={1000-8314},
   review={\MR{2396231}},
}

\bib{zhong-yang}{article}{
   author={Zhong, Jia Qing},
   author={Yang, Hong Cang},
   title={On the estimate of the first eigenvalue of a compact Riemannian
   manifold},
   journal={Sci. Sinica Ser. A},
   volume={27},
   date={1984},
   number={12},
   pages={1265--1273},
   issn={0253-5831},
   review={\MR{794292}},
}

\end{biblist}
\end{bibdiv}

\end{document}